\definecolor{webgreen}{rgb}{0,.5,0}
\definecolor{webbrown}{rgb}{.6,0,0}
\tikzset{circle node/.style = {circle,inner sep=1pt,draw, fill=white},
        X node/.style = {fill=white, inner sep=1pt},
        dot node/.style = {circle, draw, inner sep=5pt}
        }
\newtheorem{theorem}{Theorem}
\newtheorem{lemma}[theorem]{Lemma}
\newtheorem{proposition}[theorem]{Proposition}
\newtheorem{corollary}[theorem]{Corollary}
\theoremstyle{definition}
\newtheorem{example}[theorem]{Example}
\newcommand{\seqnum}[1]{\href{http://oeis.org/#1}{\underline{#1}}}
\begin{document}

\begin{center}
\vskip 1cm{\LARGE\bf On the Gap-sum and Gap-product Sequences of Integer Sequences} \vskip 1cm \large
Paul Barry\\
School of Science\\
Waterford Institute of Technology\\
Ireland\\
\href{mailto:pbarry@wit.ie}{\tt pbarry@wit.ie}
\end{center}
\vskip .2 in

\begin{abstract} In this note, we explore two families of sequences associated to a suitable integer sequence: the gap-sum sequence and the gap-product sequence. These are the sums and the products of consecutive numbers not in the original sequence. We give closed forms for both, in terms of the original sequence, and in the case of Horadam sequences, we find the generating function of the gap-sum sequence. For some elementary sequences, we indicate that the gap-product sequences are given by the Fuss-Catalan-Raney numbers.
\end{abstract}

\section{Introduction}
Let $A$ be the integer sequence $(a_n)_{n \ge 0}$. The consecutive numbers between, but not including, $a_n$ to $a_{n+1}$ will be called the $n$th $A$-gap, or just the $n$th gap, when the sequence in question is clear. Note that for sequences with $a_{n+1} > a_n+1$, each gap is an arithmetic sequence with initial term $a_n+1$, with $a_{n+1}-a_n-1$ terms, and increment $+1$. If $a_{n+1}=a_n+1$, then we say that the gap is $0$.
\begin{example} We consider the sequence $a_n=J_{n+2}$ which begins
$$1, 3, 5, 11, 21, 43, 85, 171,\ldots.$$ Here, $J_n$ is the $n$-th Jacobsthal number, and hence
$$a_n=4\frac{2^n}{3}-\frac{(-1)^n}{3}.$$
The first gaps of this sequence are then
$$1,\underbrace{2},3,\underbrace{4},5,\underbrace{6,7,8,9,10},11,\underbrace{12,13,\cdots,20},21,\ldots.$$
\end{example}
We define \emph{the gap-sum sequence of $a_n$} to be the sequence whose $n$-th term is the sum of the elements of the $n$th-gap. Thus for the sequence above, the gap-sum sequence begins
$$2, 4, 40,144,\ldots.$$ This sequence is often called the sequence of sums of consecutive non-$A$ numbers.
Similarly, we define the \emph{gap-product sequence of $a_n$} to be the sequence whose $n$-th term is the product of the elements of the $n$-th gap. For the sequence of the above example, we have that the gap-product sequence begins
$$2,4, 30240,60949324800,\ldots.$$
A first result is the following.
\begin{proposition} Let $S_n$ be the gap-sum sequence of $a_n$. Then we have
$$S_n=\sum_{j=1}^{a_{n+1}-a_n-1} (a_n+j).$$
Let $P_n$ be the gap-product sequence of $a_n$. Then we have
$$P_n=\prod_{j=1}^{a_{n+1}-a_n-1} (a_n+j).$$
\end{proposition}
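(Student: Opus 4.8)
The plan is to unpack the definitions directly, since both formulas follow once we make explicit the elements of the $n$th gap. The key observation is already recorded in the introduction: the $n$th $A$-gap consists precisely of the integers lying strictly between $a_n$ and $a_{n+1}$, and when $a_{n+1} > a_n + 1$ these form an arithmetic progression with initial term $a_n + 1$, increment $+1$, and $a_{n+1} - a_n - 1$ terms.

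First I would list these integers as $a_n + 1, a_n + 2, \ldots, a_{n+1} - 1$, and reindex by writing the $j$th element of the gap as $a_n + j$. One checks that $j$ then ranges over $1 \le j \le a_{n+1} - a_n - 1$, so there are exactly $a_{n+1} - a_n - 1$ terms, matching the count attached to the arithmetic-progression description. From here the two claims are immediate: by the definition of the gap-sum sequence as the sum of the elements of the $n$th gap,
$$S_n = \sum_{j=1}^{a_{n+1}-a_n-1} (a_n + j),$$
and by the definition of the gap-product sequence as the product of those same elements,
$$P_n = \prod_{j=1}^{a_{n+1}-a_n-1} (a_n + j).$$

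Since the result is essentially a translation of the definitions into summation and product notation, there is no substantive analytic difficulty. The only point requiring care, and hence the main ``obstacle,'' is the degenerate case $a_{n+1} = a_n + 1$, where the gap is empty. There the upper limit equals $0$: the empty sum is $0$, consistent with the convention that such a gap is $0$, while the empty product is $1$. I would therefore close the argument by remarking explicitly on the indexing convention and on this empty-gap case, so that both formulas hold uniformly for all $n$.
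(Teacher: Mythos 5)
Your proof is correct and takes essentially the same approach as the paper, which simply observes that the formulas are a reformulation of the definitions. Your additional remark on the empty-gap case (empty sum $0$, empty product $1$) is a sensible clarification the paper leaves implicit, but it does not change the argument.
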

\begin{proof}
This is a reformulation of the definitions of the gap-sum and the gap-product sequences.
\end{proof}
\begin{example}
We consider the Fibonacci numbers \seqnum{A000045}
$$F_n=\frac{1}{\sqrt{5}}\left(\frac{1}{2}+\frac{\sqrt{5}}{2}\right)^n-\frac{1}{\sqrt{5}}\left(\frac{1}{2}-\frac{\sqrt{5}}{2}\right)^n.$$
We find that the formula for the  gap-sum sequence $\sum_{j=1}^{F_{n+1}-F_n-1} F_n+j$ above in this case gives us
$$0, 0, 0, 0, 4, 13, 42, 119, 330, 890, 2376,\ldots.$$
This is \seqnum{A109454} in the On-Line Encyclopedia of Integer Sequences (OEIS) \cite{SL1, SL2}.
\end{example}
\begin{example}
We let $A$ denote the sequence \seqnum{A000040} of prime numbers
$$2, 3, 5, 7, 11, 13, 17, 19, 23, 29, 31, 37, 41, 43, 47,\ldots.$$ The prime gap-sum sequence then begins
$$0, 4, 6, 27, 12, 45, 18, 63, 130, 30, 170,\ldots.$$ This is the sequence \seqnum{A054265} in the OEIS, where it is described as the sequence of sums of composite numbers between successive primes.
\end{example}
Using the formula for the sum of an arithmetic series, we obtain the following result for the gap-sum sequence.
\begin{proposition}
Let $a_n$ be a sequence such that $a_{n+1} > a_n$. Then
$$S_n = \frac{a_{n+1}-a_{n}-1}{2} \left(a_n + a_{n+1}\right).$$
\end{proposition}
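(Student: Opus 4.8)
The plan is to start from the closed form for the gap-sum established in the previous proposition, namely
$$S_n=\sum_{j=1}^{a_{n+1}-a_n-1} (a_n+j),$$
and to evaluate this finite sum directly. The key observation is that the summand $a_n+j$ is an arithmetic progression in $j$: as $j$ runs from $1$ to $a_{n+1}-a_n-1$, the terms increase by $1$ at each step, starting at $a_n+1$ and ending at $a_{n+1}-1$. The hypothesis $a_{n+1}>a_n$ guarantees that the upper limit $a_{n+1}-a_n-1$ is a nonnegative integer, so the sum is well-defined (and equals $0$ when $a_{n+1}=a_n+1$, i.e.\ when the gap is empty).

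First I would split the sum into two pieces, writing
$$S_n=\sum_{j=1}^{a_{n+1}-a_n-1} a_n+\sum_{j=1}^{a_{n+1}-a_n-1} j.$$
The first piece is simply $(a_{n+1}-a_n-1)\,a_n$, since $a_n$ is constant in $j$. The second piece is a triangular number: by the standard formula $\sum_{j=1}^{m} j=\tfrac{m(m+1)}{2}$ with $m=a_{n+1}-a_n-1$, it equals
$$\frac{(a_{n+1}-a_n-1)(a_{n+1}-a_n)}{2}.$$
Adding these and factoring out the common factor $\tfrac{a_{n+1}-a_n-1}{2}$ should yield the claimed closed form.

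The remaining work is purely algebraic: I would collect the two contributions over the common denominator $2$ and simplify the bracketed factor. Concretely, the combined numerator becomes $(a_{n+1}-a_n-1)\bigl(2a_n+(a_{n+1}-a_n)\bigr)$, and the inner factor $2a_n+a_{n+1}-a_n$ collapses to $a_n+a_{n+1}$. This gives exactly
$$S_n=\frac{a_{n+1}-a_n-1}{2}\left(a_n+a_{n+1}\right),$$
as desired. I do not anticipate any genuine obstacle here; this is the familiar ``sum of an arithmetic series equals number of terms times average of first and last term'' identity, specialized to first term $a_n+1$ and last term $a_{n+1}-1$, whose average is $\tfrac{(a_n+1)+(a_{n+1}-1)}{2}=\tfrac{a_n+a_{n+1}}{2}$. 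The only point meriting a word of care is the boundary case of an empty gap, which the factor $a_{n+1}-a_n-1$ handles automatically by vanishing.
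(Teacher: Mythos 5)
Your proof is correct and follows essentially the same route as the paper: both evaluate the gap-sum as an arithmetic series with first term $a_n+1$, last term $a_{n+1}-1$, and $a_{n+1}-a_n-1$ terms (the paper plugs into the formula $\frac{m}{2}\left(2t_1+(m-1)\right)$, while you split off the constant part and a triangular number, an algebraically equivalent computation). Your explicit handling of the empty-gap boundary case is a small point of care the paper leaves implicit.
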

\begin{proof} The $n$-th gap is an arithmetic sequence beginning with $a_n+1$ and having step $d=1$, with $a_{n+1}-a_n-1$ terms.
Thus we have
$$S_n = \frac{a_{n+1}-a_n-1}{2}\left(2(a_n+1)+(a_{n+1}-a_n-1)-1\right)=\frac{a_{n+1}-a_n-1}{2}\left(a_n+a_{n+1}\right).$$
\end{proof}
For instance, if $p_n$ denotes the $n$-th prime, then the prime gap-sum sequence has general term
$$S_n = \frac{p_{n+1}-p_n-1}{2}\left(p_n+p_{n+1}\right).$$
We note that the gap-sum sequence can be the zero sequence. For example, if $a_n=n$, then the gap-sum sequence is the zero sequence
$$0,0,0,0,\ldots.$$
\begin{corollary} Let $a_n=rn$, $r \in \mathbb{N}$. Then the gap-sum sequence of $a_n$ is given by
$$S_n=\frac{1}{2}(2n+1)r(r-1)=(2n+1)\binom{r}{2}.$$
\end{corollary}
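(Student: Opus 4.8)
The plan is to apply the closed form for the gap-sum sequence established in the previous proposition directly to the sequence $a_n = rn$. First I would verify that the hypothesis of that proposition holds: since $r \in \mathbb{N}$ with $r \ge 1$, we have $a_{n+1} - a_n = r \ge 1 > 0$, so the sequence is (weakly, and in fact strictly) increasing as required, and the formula
$$S_n = \frac{a_{n+1} - a_n - 1}{2}\left(a_n + a_{n+1}\right)$$
is available for substitution.

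The key computational step is to substitute $a_n = rn$ and $a_{n+1} = r(n+1)$ into this expression. I would compute the two factors separately: the term count is $a_{n+1} - a_n - 1 = r(n+1) - rn - 1 = r - 1$, and the sum of endpoints is $a_n + a_{n+1} = rn + r(n+1) = r(2n+1)$. Multiplying and dividing by $2$ then yields
$$S_n = \frac{r-1}{2} \cdot r(2n+1) = \frac{1}{2}(2n+1)r(r-1),$$
which is exactly the claimed formula. The final identity $\frac{1}{2}(2n+1)r(r-1) = (2n+1)\binom{r}{2}$ follows immediately from the definition $\binom{r}{2} = \frac{r(r-1)}{2}$.

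I do not anticipate any genuine obstacle here, as this is a routine specialization of an already-proven formula combined with an elementary rewriting of a binomial coefficient. The only point requiring a moment's care is the boundary behavior: when $r = 1$ the formula correctly returns $S_n = 0$ for all $n$, consistent with the earlier observation that the sequence $a_n = n$ has the zero gap-sum sequence, and this sanity check confirms that the derivation handles the degenerate case gracefully.
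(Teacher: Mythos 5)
Your proposal is correct and follows exactly the route the paper intends: the corollary is a direct specialization of the preceding proposition, and substituting $a_n = rn$ to get $a_{n+1}-a_n-1 = r-1$ and $a_n + a_{n+1} = r(2n+1)$ is precisely the (omitted) computation, with the $\binom{r}{2}$ rewriting being immediate. Your check that $r=1$ recovers the zero gap-sum sequence of $a_n = n$ is a nice touch consistent with the paper's own remark.
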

\begin{corollary} Let $a_n=k^n$, then the gap-sum sequence for $a_n$ is given by
$$S_n = \frac{1}{2}\left((k^2-1)k^{2n}-(k+1)k^n\right),$$ with generating function
$$S(x)=\frac{1}{2}\frac{(k+1)(kx+k-2)}{(1-kx)(1-k^2x)}.$$
\end{corollary}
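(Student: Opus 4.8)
The plan is to obtain the closed form directly from the preceding Proposition, which asserts $S_n = \frac{a_{n+1}-a_n-1}{2}(a_n+a_{n+1})$ for any sequence with $a_{n+1}>a_n$, and then to read off the generating function by recognizing $S_n$ as a combination of two geometric sequences.

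First I would substitute $a_n=k^n$ and $a_{n+1}=k^{n+1}$ into that formula. Writing the first factor as $k^{n+1}-k^n-1=k^n(k-1)-1$ and the second as $k^n+k^{n+1}=k^n(k+1)$, their product expands as $k^{2n}(k+1)(k-1)-k^n(k+1)=(k^2-1)k^{2n}-(k+1)k^n$, so that $S_n=\tfrac{1}{2}\left((k^2-1)k^{2n}-(k+1)k^n\right)$, exactly as stated. This step is purely algebraic; the only care needed is in tracking which terms carry $k^{2n}$ and which carry $k^n$.

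For the generating function I would treat $S_n$ as a linear combination of $(k^2)^n$ and $k^n$, giving $S(x)=\tfrac{1}{2}\left((k^2-1)\sum_{n\ge 0}(k^2x)^n-(k+1)\sum_{n\ge 0}(kx)^n\right)$. Summing the two geometric series yields $S(x)=\tfrac{1}{2}\left(\frac{k^2-1}{1-k^2x}-\frac{k+1}{1-kx}\right)$.

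Finally I would place this over the common denominator $(1-kx)(1-k^2x)$ and expand the numerator $(k^2-1)(1-kx)-(k+1)(1-k^2x)$. Its constant term is $k^2-k-2=(k-2)(k+1)$ and its coefficient of $x$ is $-k(k^2-1)+k^2(k+1)=k(k+1)$; pulling out the common factor $(k+1)$ then gives the numerator $(k+1)(kx+k-2)$ and hence the claimed $S(x)$. The main obstacle, modest as it is, is precisely this last simplification: one must recognize the shared factor $(k+1)$ rather than leave an unfactored cubic, and this emerges cleanly once the constant and linear coefficients in $x$ are computed separately.
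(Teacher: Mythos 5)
Your proposal is correct and follows exactly the route the paper intends: the corollary is a direct substitution of $a_n=k^n$ into the preceding proposition's formula $S_n=\frac{a_{n+1}-a_n-1}{2}(a_n+a_{n+1})$, followed by summing the two geometric series $(k^2x)^n$ and $(kx)^n$ and factoring $(k+1)$ out of the combined numerator. All your algebra checks out (for instance, at $k=2$ it reproduces the paper's $\frac{3x}{(1-2x)(1-4x)}$), so nothing further is needed.
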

For $k=2$, this is the gap-sum sequence for $2^n$. This is \seqnum{A103897}, which begins
$$0,3, 18, 84, 360, 1488, 6048, 24384,\ldots.$$
This sequence has generating function
$$\frac{3x}{(1-2x)(1-4x)}.$$
We note that if $a_n=2^n-1$, then $S_n$ is the sequence that begins
$$0, 2, 15, 77, 345, 1457, 5985,\ldots.$$
This sequence has the generating function
$$\frac{x(2+x)}{(1-x)(1-2x)(1-4x)}.$$
\begin{example} We consider the case of $a_n=2n^2$. We find that
$$S_n=8n^3+10n^2+6n+1,$$ with generating function
$$\frac{1+21x+33x^2+3x^3}{(1-x)^4}.$$
This sequence begins
$$1, 25, 117, 325, 697, 1281, 2125, 3277, 4785,\ldots.$$
\end{example}
The gap-sum sequence for $a_n=n^2$ is \seqnum{A048395}. For this sequence we have 
$$S_n=2n^3 + 2n^2 + n,$$ with generating function 
$$\frac{x(1+x)(5+x)}{(1-x)^4}.$$ 
We give below a short table of figurate numbers and their gap-sum sequences.
\begin{center}\begin{tabular}{|c|c|c|c|}\hline
G.f. of $a_n$ & $a_n$ & $S_n$ & G.f. of $S_n$ \\ \hline
$\frac{x(1+x)}{(1-x)^3}$ & $n^2$ & $2n^3 + 2n^2 + n$ & $\frac{x(1+x)(5+x)}{(1-x)^4}$ \\ \hline
$\frac{x}{(1-x)^3}$ & $\frac{n(n+1)}{2}$ & $\frac{n(n+1)^2}{2}$ & $\frac{x(2+x)}{(1-x)^4}$ \\ \hline
$\frac{2x}{(1-x)^3}$ & $n(n+1)$ & $(2n+1)(n+1)^2$ & $\frac{1+8x+3x^2}{(1-x)^4}$ \\ \hline
$\frac{x(2+x)}{(1-x)^3}$ & $\frac{n(3n+1)}{2}$ & $\frac{(3n+1)(3n^2+4n+2)}{2}$ & $\frac{1+14x+11x^2+x^3}{(1-x)^4}$ \\ \hline
$\frac{x}{(1-x)^4}$ & $\frac{n}{6}(n+1)(n+2)$ & $\frac{n(n+1)(n+2)(n+3)(2n+3)}{4!}$ & $\frac{5x(1+x)}{(1-x)^6}$ \\ \hline
$\frac{x}{(1-x)^5}$ & $\binom{n+3}{4}$ & $\frac{n(n+1)(n+2)^2(n+3)(n^2+6n+11)}{144}$ & $\frac{x(9+18x+7x^2+x^3)}{(1-x)^8}$ \\ \hline
$\frac{x(1+2x)}{(1-x)^3}$ & $\frac{n(3n-1)}{2}$ & $\frac{3n(3n^2+2n+1)}{3}$ & $\frac{3x(x^2+5x+3)}{(1-x)^4}$ \\ \hline
\end{tabular}\end{center}
The gap-sum sequence of the triangular numbers $\frac{n(n+1)}{2}=\binom{n+1}{2}$ is \seqnum{A006002}. 

The gap-sum sequence of the tetrahedral numbers $\frac{n}{6}(n+1)(n+2)=\binom{n+2}{3}$ can be expressed as 
$$5 \binom{n+3}{4}+10 \binom{n+3}{5}.$$ 
This is $5$ times \seqnum{A005585}, the $5$-dimensional pyramidal numbers $\frac{n(n+1)(n+2)(n+3)(2n+3)}{5!}$. 

The gap-sum sequence of $\binom{n+3}{4}$ can be expressed as 
$$9 \binom{n+4}{5}+36 \binom{n+4}{6}+34 \binom{n+4}{7}+\binom{n+3}{7}.$$
\section{The gap-product sequence}
\begin{proposition} Let $a_n$ be a sequence with $a_{n+1} \ge a_n$. Then the gap-product sequence $P_n$ is given by $$P_n=\frac{(a_{n+1}-1)!}{a_n!}.$$
\end{proposition}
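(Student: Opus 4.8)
The plan is to build directly on the product formula already established, namely $P_n = \prod_{j=1}^{a_{n+1}-a_n-1}(a_n+j)$ from the earlier proposition, and to recognize this product as a telescoping quotient of factorials. No generating-function machinery or induction is needed here; the entire content is the combinatorial observation that the $n$th gap is a run of consecutive integers, and that the product of a run of consecutive integers collapses into a ratio of two factorials.

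First I would reindex the product to see the range of factors explicitly. As $j$ runs over $1, 2, \ldots, a_{n+1}-a_n-1$, the factor $a_n+j$ runs over the consecutive integers $a_n+1, a_n+2, \ldots, a_{n+1}-1$; this is exactly the statement that the $n$th gap is the interval of integers strictly between $a_n$ and $a_{n+1}$. Hence $P_n = (a_n+1)(a_n+2)\cdots(a_{n+1}-1)$. Next I would convert this run into a quotient of factorials by inserting the missing low factors: since $(a_{n+1}-1)! = \bigl(1\cdot 2\cdots a_n\bigr)\cdot\bigl((a_n+1)(a_n+2)\cdots(a_{n+1}-1)\bigr) = a_n!\cdot(a_n+1)(a_n+2)\cdots(a_{n+1}-1)$, dividing by $a_n!$ isolates the gap-product and yields $P_n = \frac{(a_{n+1}-1)!}{a_n!}$, which is the claimed closed form. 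This identity is valid precisely when $a_{n+1}-1 \ge a_n$, i.e.\ when $a_{n+1} \ge a_n+1$.

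The one point I would be careful about — and the only real obstacle, such as it is — is the boundary behavior of the product, which is why I would state the empty-product convention explicitly. When the gap is zero, that is $a_{n+1} = a_n+1$, the upper index $a_{n+1}-a_n-1$ equals $0$, so $P_n$ is an empty product and equals $1$ by convention; the closed form simultaneously gives $(a_{n+1}-1)!/a_n! = a_n!/a_n! = 1$, so the two agree and the formula extends uniformly over these degenerate gaps. I would note, however, that in the fully degenerate case $a_{n+1}=a_n$ permitted by the stated hypothesis the formula would read $(a_n-1)!/a_n! = 1/a_n$, which does not match the empty product $1$; the natural regime for the identity is therefore $a_{n+1} \ge a_n+1$ (equivalently $a_{n+1} > a_n$ for integer sequences), and I would flag this restriction rather than claim the formula for a literally constant step.
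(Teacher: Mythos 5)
Your proof is correct, and it takes a more elementary route than the paper. The paper's proof invokes a general formula for the product of an arithmetic progression with increment $d$ in terms of Gamma functions, $P=\Gamma\left(\frac{t_1}{d}+r\right)/\Gamma(t_1)$ (as printed, the denominator should really be $\Gamma\left(\frac{t_1}{d}\right)$, though this coincides with $\Gamma(t_1)$ in the relevant case $d=1$), and then specializes to $t_1=a_n+1$, $r=a_{n+1}-a_n-1$, $d=1$, so that $\Gamma(a_{n+1})/\Gamma(a_n+1)=(a_{n+1}-1)!/a_n!$. You instead work directly from the earlier product formula, observing that the factors $a_n+1,\ldots,a_{n+1}-1$ form a run of consecutive integers whose product telescopes as $(a_{n+1}-1)!/a_n!$ after multiplying and dividing by $a_n!$. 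The two computations are mathematically the same in the end, but your version is self-contained and avoids importing (and trusting) the Gamma-function formula, while the paper's version buys generality for arbitrary increments $d$ that is never actually used. You also add genuine value the paper omits: you verify the boundary case $a_{n+1}=a_n+1$, where the empty product and the closed form both give $1$, and you correctly flag that the stated hypothesis $a_{n+1}\ge a_n$ is too weak --- if $a_{n+1}=a_n$ the formula yields $1/a_n$ rather than the empty product $1$, so the proposition really requires $a_{n+1}>a_n$, a slip in the paper's statement that its own proof silently glosses over.
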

\begin{proof}
The product of the terms of an arithmetic sequence with initial term $t_1$, with $r$ terms and increment $d$, is given by
$$P=\frac{\Gamma\left(\frac{t_1}{d}+r\right)}{\Gamma(t_1)}.$$
For $P_n$, we have $t_1=a_n+1$, $r=a_{n+1}-a_n-1$, and $d=1$.
The result follows from this.
\end{proof}
\begin{example} We consider the Fibonacci numbers (whose terms satisfy $a_{n+1}>a_n$ after $n=2$). We obtain that the gap-product sequence
$$P_n = \frac{(F_{n+1}-1)!}{F_n!}$$ begins
$$1, 1, 1, 1, 4, 42, 11880, 390700800, 169958063987712000,\ldots.$$
Since $F{n+1}=F_n+F_{n-1}$, we have that
$$P_n=\frac{\prod_{j=1}^{F_{n-1}-1}(F_{n+1}-j) F_n!}{F_n!}=\prod_{j=1}^{F_{n-1}-1}(F_{n+1}-j).$$
\end{example}
\begin{example} For the sequence $a_n=4\frac{2^n}{3}-\frac{(-1)^n}{3}$, we have
$$P_n=\frac{(8\frac{2^n}{3}+\frac{(-1)^n}{3}-1)!}{(4\frac{2^n}{3}-\frac{(-1)^n}{3})!}=\frac{(J_{n+3}-1)!}{J_{n+2}!},$$ where $J_n=\frac{2^n}{3}-\frac{(-1)^n}{3}$ is the $n$-th Jacobsthal number (\seqnum{A001045}).
\end{example}
\section{The gap-sum sequences of Horadam sequences}
A Horadam sequence is a second-order recurrence sequence $a_n=a_n(\alpha, \beta, r, s)$  that is  defined by
$$ a_n=r a_{n-1}+ s a_{n-2},$$
with $a_0=\alpha$, $a_1=\beta$. For instance, $F_n=F_n(0,1,1,1)$ and $J_n=J_n(0,1,1,2)$. The generating function of $a_na_n(\alpha, \beta, r, s)$ is given by
$$\frac{\alpha-(\alpha r-\beta)x}{1-rx-sx^2}.$$
In this section we wish to find the generating function of the gap-sum sequence for a general Horadam sequence. We know that the gap-sum sequence for a general sequence $a_n$ is given by
$$S_n=\frac{1}{2}\left(a_n+a_{n+1}\right)\left(a_{n+1}-a_n-1\right)=\frac{1}{2}\left(a_{n+1}^2-a_n^2-a_n-a_{n+1}\right).$$ Thus the generating function of $S_n$ is determined by those of $a_n$, $a_{n+1}$, $a_n^2$, and $a_{n+1}^2$. In the case of Horadam sequences, the sequence $a_{n+1}$ is again a Horadam sequence, with generating function
 $$\frac{\beta+\alpha s x}{1-rx-sx^2}, $$ with initial terms $\beta, r \alpha+ s \beta$. Thus
 $$a_{n+1}(\alpha, \beta, r, s)=a_n(\beta, r \alpha+ s \beta, r, s).$$
 Thus we are left with the problem of finding the generating function of the square of a Horadam sequence. This question has been addressed in \cite{Mansour}.
\begin{lemma} The generating function of $a_n^2(\alpha, \beta, r, s)$ is given by
$$g_2(\alpha, \beta, r, s)=\frac{\alpha^2-(\alpha^2(r^2+s)-\beta^2)x-s(\alpha^2 r^2- 2 \alpha \beta r+beta^2)x^2}{1-(r^2+s)x-s(r^2+s)x^2+s^3x^3},$$ and the generating function of $a_{n+1}^2(\alpha, \beta, r, s)$ is given by
$$\frac{\beta^2+s(\alpha^2 s+2 \alpha \beta r-\beta^2)x-\alpha^2 s^3 x^2}{1-(r^2+s)x-s(r^2+s)x^2+s^3 x^3}.$$
\end{lemma}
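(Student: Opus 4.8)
The plan is to pass to the Binet (closed) form of the Horadam sequence and to exploit the fact that squaring turns a sum of two geometric sequences into a sum of three. Writing $\lambda$ and $\mu$ for the roots of the characteristic equation $x^2 - rx - s = 0$, so that $\lambda + \mu = r$ and $\lambda\mu = -s$, we have $a_n = A\lambda^n + B\mu^n$ for suitable $A, B$ depending on $\alpha, \beta$ (assuming for the moment that $\lambda \ne \mu$). Then
$$a_n^2 = A^2(\lambda^2)^n + 2AB(\lambda\mu)^n + B^2(\mu^2)^n,$$
so $a_n^2$ is a linear combination of the geometric sequences with ratios $\lambda^2$, $\mu^2$, and $\lambda\mu$. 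Consequently $a_n^2$ satisfies a third-order linear recurrence whose characteristic polynomial is $(x - \lambda^2)(x - \mu^2)(x - \lambda\mu)$, and its generating function is therefore rational with denominator read off from this polynomial.

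The first key step is to expand the characteristic polynomial into the symmetric functions of its three roots and re-express everything in $r$ and $s$. From $\lambda + \mu = r$ and $\lambda\mu = -s$ one gets $\lambda^2 + \mu^2 = r^2 + 2s$, whence the elementary symmetric functions of $\lambda^2, \mu^2, \lambda\mu$ are $e_1 = r^2 + s$, $e_2 = (\lambda\mu)^2 + \lambda\mu(\lambda^2+\mu^2) = -s(r^2+s)$, and $e_3 = (\lambda\mu)^3 = -s^3$. Hence the characteristic polynomial is $x^3 - (r^2+s)x^2 - s(r^2+s)x + s^3$, and the denominator of the generating function of $a_n^2$ is
$$1 - (r^2+s)x - s(r^2+s)x^2 + s^3 x^3,$$
exactly as claimed.

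Next I would pin down the numerator from the initial data. Writing $D(x)$ for this denominator and $g_2 = N(x)/D(x)$ with $\deg N \le 2$, the numerator is recovered by truncating $D(x)\sum_n a_n^2 x^n$, namely $N(x) = b_0 + (b_1 - (r^2+s)b_0)x + (b_2 - (r^2+s)b_1 - s(r^2+s)b_0)x^2$, where $b_0 = \alpha^2$, $b_1 = \beta^2$, and $b_2 = a_2^2 = (r\beta+s\alpha)^2$. A direct expansion collapses the $x$-coefficient to $\beta^2 - \alpha^2(r^2+s)$ and the $x^2$-coefficient to $-s(\alpha r - \beta)^2 = -s(\alpha^2 r^2 - 2\alpha\beta r + \beta^2)$, yielding the stated $g_2(\alpha,\beta,r,s)$. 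For the generating function of $a_{n+1}^2$, rather than repeating the computation I would use the shift rule: $\sum_n a_{n+1}^2 x^n = (N(x) - \alpha^2 D(x))/(x\,D(x))$. Since $N$ and $\alpha^2 D$ share the constant term $\alpha^2$, the division by $x$ is exact, and collecting coefficients gives precisely $\beta^2 + s(\alpha^2 s + 2\alpha\beta r - \beta^2)x - \alpha^2 s^3 x^2$ over the same denominator. Equivalently, one may invoke the earlier identity $a_{n+1}(\alpha,\beta,r,s) = a_n(\beta, r\alpha+s\beta, r, s)$ and substitute the shifted initial data into $g_2$.

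I expect the main obstacle to be bookkeeping rather than anything conceptual: keeping the signs straight through the symmetric-function expansion (the relation $\lambda\mu = -s$, not $+s$, is the easiest thing to mishandle) and confirming the $x^2$-coefficient, where several terms cancel before factoring as $-s(\alpha r - \beta)^2$. A secondary point that must be addressed is the degenerate case $\lambda = \mu$ (equivalently $r^2 + 4s = 0$), where the Binet form breaks down. Here I would observe that both sides of the asserted identity are rational functions whose coefficients are polynomials in $(\alpha,\beta,r,s)$; since the identity holds on the open set where the discriminant $r^2+4s$ is nonzero, and that set is Zariski-dense, the identity extends to all parameter values by the principle that two polynomials agreeing on a dense set coincide.
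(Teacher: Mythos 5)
Your proof is correct, but note that the paper itself offers no proof of this lemma: it simply quotes the result, citing Mansour's paper on the generating functions of powers of Horadam sequences, where the general $k$-th power case is derived (likewise from the Binet form). So what you have produced is a self-contained verification where the paper defers to the literature. Your specialization to $k=2$ is efficient: the observation that $a_n^2$ is a linear combination of geometrics with ratios $\lambda^2$, $\mu^2$, $\lambda\mu$ pins down the denominator via the elementary symmetric functions, and I checked your computations $e_1=r^2+s$, $e_2=-s(r^2+s)$, $e_3=-s^3$, as well as the numerator coefficients: the $x^2$-coefficient does collapse to $-s(\alpha r-\beta)^2$, which incidentally confirms that the paper's ``$beta^2$'' in the statement is a typo for $\beta^2$. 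The shift-rule step $\left(N(x)-\alpha^2 D(x)\right)/\left(x\,D(x)\right)$ is also correct and yields exactly the stated second generating function. Your handling of the degenerate case $r^2+4s=0$ by Zariski density is legitimate and addresses a genuine gap in the Binet approach; alternatively, when $\lambda=\mu$ one has $a_n=(A+Bn)\lambda^n$, so $a_n^2=(A+Bn)^2\lambda^{2n}$ is annihilated by $(E-\lambda^2)^3$, which is the same characteristic polynomial, so the recurrence argument survives directly without a density argument. One small slip, inherited from a typo in the paper: your closing aside invokes $a_{n+1}(\alpha,\beta,r,s)=a_n(\beta, r\alpha+s\beta, r, s)$, but the shifted sequence has second term $a_2=r\beta+s\alpha$, so the identity should read $a_{n+1}(\alpha,\beta,r,s)=a_n(\beta, r\beta+s\alpha, r, s)$ --- as the paper's own generating function $\frac{\beta+\alpha s x}{1-rx-sx^2}$ shows, since its $x$-coefficient is $r\beta+s\alpha$. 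With the corrected substitution, $g_2(\beta, r\beta+s\alpha, r, s)$ does reproduce the stated numerator (for instance its $x^2$-coefficient is $-s\bigl(\beta r-(r\beta+s\alpha)\bigr)^2=-s^3\alpha^2$), and since your actual derivation uses the shift rule rather than this aside, the proof stands as written.
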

\begin{example} The generating function of the square of $J_{n+2}$ is given by
$$g_2(1,3,1,2)=\frac{1+6x-8x^2}{1-3x-6x^2+8x^3}.$$
\end{example}
The lemma then leads to the following result.
\begin{proposition} The generating sequence of the gap-sum sequence of the Horadam sequence $a_n=a_n(\alpha, \beta, r, s)$ is given as follows. Let
$$W=\frac{\beta^2+s(\alpha^2 s+2 \alpha \beta r-\beta^2)x-\alpha^2 s^3 x^2}{1-(r^2+s)x-s(r^2+s)x^2+s^3 x^3}.$$
Let
$$X=\frac{\alpha^2-(\alpha^2(r^2+s)-\beta^2)x-s(\alpha^2 r^2- 2 \alpha \beta r+beta^2)x^2}{1-(r^2+s)x-s(r^2+s)x^2+s^3x^3}.$$
Let
$$Y= \frac{\beta+ \alpha s x}{1-rx-sx^2},$$ and
let
$$Z=\frac{\alpha-(\alpha r-\beta)x}{1-rx-sx^2}.$$

Then the generating function of the gap-sum sequence of the Horadam sequence $a_n=a_n(\alpha, \beta, r, s)$ \seqnum{A002605} which begins
$$1, 2, 6, 16, 44, 120, 328,\ldots,$$ is given by
$$W-X-Y-Z.$$
\end{proposition}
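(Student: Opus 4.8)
The plan is to reduce the statement to the closed form for $S_n$ already obtained in this section and then to exploit the linearity of the generating-function operation. Recall that for any sequence with $a_{n+1} > a_n$ we established
$$S_n = \frac{1}{2}\left(a_{n+1}^2 - a_n^2 - a_n - a_{n+1}\right).$$
Since the map sending a sequence to its ordinary generating function is linear, the generating function $S(x) = \sum_{n \ge 0} S_n x^n$ is precisely the corresponding linear combination of the generating functions of the four sequences $a_{n+1}^2$, $a_n^2$, $a_{n+1}$, and $a_n$. Thus the whole task splits into identifying those four generating functions and then assembling them; there is no new analytic input required beyond what the Lemma already supplies.

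First I would record the monotonicity that legitimizes the closed form. For the Horadam sequence \seqnum{A002605}, i.e. $a_n = a_n(1,2,2,2)$, the recurrence $a_{n+1} = 2a_n + 2a_{n-1}$ gives $a_{n+1} - a_n = a_n + 2a_{n-1} > 0$ for $n \ge 1$, and $a_1 - a_0 = 1 > 0$, so $a_{n+1} > a_n$ for all $n \ge 0$ and the formula for $S_n$ holds termwise (with the empty initial gaps correctly returning $S_n = 0$). Next I would match the four generating functions to the quantities named in the statement: $Z$ is the standard Horadam generating function quoted at the start of the section, namely that of $a_n$; $Y$ is the generating function of the shifted sequence $a_{n+1}$, obtained from the identity $a_{n+1}(\alpha,\beta,r,s) = a_n(\beta, r\alpha + s\beta, r, s)$; and $X$ and $W$ are exactly the generating functions of $a_n^2$ and of $a_{n+1}^2$ delivered by the Lemma. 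Substituting these into the linear combination for $S(x)$ then yields the result as a single rational function in $x$.

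The only genuine content beyond this bookkeeping is the Lemma itself — the generating function of the square of a Horadam sequence — which I would take as given, since it is the result cited from \cite{Mansour}; all the remaining steps are substitution and collection of terms. The step I expect to require the most care is the overall normalizing constant: because the closed form for $S_n$ carries a factor $\tfrac12$, the correct assembly is $S(x) = \tfrac12\left(W - X - Y - Z\right)$, so I would verify this factor against the initial data of the gap-sum sequence (which begins $0, 0, 12, 99, \ldots$) before committing to the bare expression $W - X - Y - Z$. Apart from this normalization check, the argument is a routine transfer of a linear termwise identity to generating functions.
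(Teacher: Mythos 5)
Your proof is exactly the paper's (implicit) argument: the paper gives no proof beyond the remark that ``the lemma then leads to the following result,'' and the intended derivation is precisely your reduction of $S_n=\tfrac12\left(a_{n+1}^2-a_n^2-a_n-a_{n+1}\right)$ to generating functions by linearity, identifying $W,X,Y,Z$ as the generating functions of $a_{n+1}^2$, $a_n^2$, $a_{n+1}$, $a_n$ via the Lemma and the shift identity $a_{n+1}(\alpha,\beta,r,s)=a_n(\beta,r\alpha+s\beta,r,s)$. Moreover, your caution about the normalizing constant is vindicated: the proposition as printed is off by exactly the factor $\tfrac12$. A direct computation for $a_n(1,2,2,2)$ gives
$$W-X-Y-Z=\frac{6x(4+x-2x^2)}{(1-2x-2x^2)(1-6x-12x^2+8x^3)},$$
which is twice the generating function $\frac{3x(4+x-2x^2)}{(1-2x-2x^2)(1-6x-12x^2+8x^3)}$ that the paper itself records for this gap-sum sequence in the example immediately following; so the correct conclusion is $S(x)=\tfrac12\left(W-X-Y-Z\right)$, as you suspected. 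Two trivial points: that gap-sum sequence begins $0,12,99,810,\ldots$ with a single leading zero, not $0,0,12,99,\ldots$; and your monotonicity check covers only the specialization $(1,2,2,2)$, whereas the proposition is stated for general parameters (the reference to \seqnum{A002605} inside the statement is itself a misplaced artifact) --- for general $(\alpha,\beta,r,s)$ one needs a positivity hypothesis for the gap-sum interpretation, though the algebraic identity for $S(x)$ holds regardless. Neither point affects the substance: your argument is the paper's argument, together with a correct repair of its normalization.
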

\begin{example} For the Horadam sequence $a_n(1,2,2,2)$ we find that the gap-sum sequence has generating function $$\frac{3x(4+x-2x^2)}{(1-2x-2x^2)(1-6x-12x^2+8x^3)}.$$
This expands to give the sequence $S_n$ that begins
$$0, 12, 99, 810, 6150, 46368, 347004,\ldots.$$
\end{example}
\begin{example} The table below summarizes the cases of the Fibonacci numbers $F_{n+1}=\sum_{k=0}^{\lfloor \frac{n}{2} \rfloor} \binom{n-k}{k}$, the Jacobsthal numbers $J_{n+1}=\sum_{k=0}^{\lfloor \frac{n}{2} \rfloor} \binom{n-k}{k}2^k$, and the Pell numbers $P_{n+1}=\sum_{k=0}^{\lfloor \frac{n}{2} \rfloor} \binom{n-k}{k}2^{n-2k}$ \seqnum{A000129}. Note that the sequence from the previous example has $a_n(1,2,2,2)=\sum_{k=0}^{\lfloor \frac{n}{2} \rfloor} \binom{n-k}{k}2^{n-k}$.

\begin{center}\begin{tabular}{|c|c|c|c|}\hline
Sequence & G.f. & G.f. of gap-sum & First terms of gap-sum\\ \hline
$F_{n+1}$ & $\frac{1}{1-x-x^2}$ & $-\frac{1-3x-x^2+x^3}{(1-x-x^2)(1-2x-2x^2+x^3)}$ & $-1, 0, 0, 4, 13, 42, 119, 330,\ldots.$ \\ \hline
$J_{n+1}$ & $\frac{1}{1-x-2x^2}$ & $-\frac{1-6x}{(1-2x)(1-2x-8x^2)}$ & $-1, 2, 4, 40, 144, 672, 2624, 10880$ \\ \hline
$P_{n+1}$ & $\frac{1}{1-2x-x^2}$ & $\frac{x(7+2x-x^2)}{(1-2x-x^2)(1-5x-5x^2+x^3)}$ & $0, 7, 51, 328, 1980, 11711, 68663, 401184,\ldots$ \\ \hline
\end{tabular}\end{center}
We note that the $-1$'s are artifacts arising from consecutive elements of the sequences in question being equal.
\end{example}

\section{A special gap-product sequence related to the Fuss-Catalan numbers}
We have seen that the gap-product sequence of the sequence $a_n$ is given by
$$P_n=\frac{(a_{n+1}-1)!}{a_n!}.$$
We take the special case of the one-parameter sequence
$$a_n=kn+1.$$
We then have that $P_n$ (which is again a one-parameter sequence) is given by
$$P_n = \frac{(k(n+1)+1-1)!}{(kn+1)!}=\frac{(k(n+1))!}{(kn+1)!}.$$
\begin{proposition}
The gap-product sequence of the sequence $a_n=kn+1$ is given by
$$P_n=k! FC(k,n)$$ where $F(n,k)=\frac{1}{kn+1}\binom{(k+1)n}{n}$ is the $n,k$ Fuss Catalan number.
\end{proposition}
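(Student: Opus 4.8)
The plan is to work purely algebraically from the factorial expression for $P_n$ already recorded for this family, and then reorganize it into a Fuss--Catalan binomial coefficient. From the special case of the general gap-product formula we start with
$$P_n=\frac{(k(n+1))!}{(kn+1)!},$$
so the entire task reduces to bookkeeping with ratios of factorials: there is no estimation or analysis to do. The one point that genuinely requires care is extracting the correct binomial coefficient together with the correct prefactor, and then matching it against the definition.

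The first step is to peel off the factor $\tfrac{1}{kn+1}$ by writing $(kn+1)!=(kn+1)\,(kn)!$, which gives
$$P_n=\frac{1}{kn+1}\cdot\frac{(k(n+1))!}{(kn)!}.$$
The remaining ratio is a product of $k$ consecutive integers, namely $(kn+1)(kn+2)\cdots(kn+k)$, and the natural move is to recognize any product of $k$ consecutive integers ending at $m$ as $k!\binom{m}{k}$. With $m=kn+k$ this yields
$$\frac{(k(n+1))!}{(kn)!}=\frac{(kn+k)!}{(kn)!}=k!\binom{kn+k}{k}=k!\binom{k(n+1)}{k},$$
and combining the two displays gives
$$P_n=\frac{k!}{kn+1}\binom{k(n+1)}{k}.$$

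It then remains only to identify the trailing factor with the Fuss--Catalan number, and here lies the single place where a careless reading could go wrong, namely the order of the arguments. Writing the defining formula with neutral slots as $F(p,q)=\frac{1}{qp+1}\binom{(q+1)p}{p}$, the quantity $FC(k,n)$ means substituting $p=k$ into the first slot and $q=n$ into the second, which produces exactly $\frac{1}{kn+1}\binom{k(n+1)}{k}$, and \emph{not} $\frac{1}{kn+1}\binom{(k+1)n}{n}$. With this substitution the last display reads $P_n=k!\,FC(k,n)$, as claimed. I expect the verification of this argument order to be the only genuine obstacle; the factorial manipulation itself is routine, and a quick check at $k=2$, where $FC(2,n)=n+1$ while the gap of $a_n=2n+1$ is the single even number $2n+2$, giving $P_n=2(n+1)$, confirms that the normalizing constant $k!$ is correct.
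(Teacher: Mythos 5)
Your proof is correct and takes essentially the same route as the paper: rewriting $P_n=\frac{(k(n+1))!}{(kn+1)!}$ as $\frac{k!}{kn+1}\binom{k(n+1)}{k}$ and identifying the last factor as the Fuss--Catalan number with arguments read in the order $FC(k,n)$. In fact your version is more careful than the paper's own proof, whose displayed chain contains a typo --- it writes $\frac{k!}{k+1}\binom{(n+1)k}{k}$ where $\frac{k!}{kn+1}\binom{(n+1)k}{k}$ is meant (the two agree only at $n=1$, e.g.\ at $k=2$, $n=2$ the correct value is $6$ while the paper's expression gives $10$) --- and your explicit handling of the argument order correctly resolves the statement's notational slip between $FC(k,n)$ and the defining formula $F(n,k)=\frac{1}{kn+1}\binom{(k+1)n}{n}$.
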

\begin{proof} This follows since
$$P_n = \frac{(k(n+1)+1-1)!}{(kn+1)!}=\frac{(k(n+1))!}{(kn+1)!}=\frac{k!}{k+1}\binom{(n+1)k}{k}.$$
\end{proof}
In the following table, we show the gap-product sequences corresponding to the sequences $kn+1$ on the left.
\begin{center}\begin{tabular} {|c|ccccccc|}\hline
$1$ & $1$ & $1$ & $1$ & $1$ & $1$ & $1$ & $\dots$ \\ \hline
$n+1$ & $1$ & $1$ & $1$ & $1$ & $1$ & $1$ & $\dots$ \\ \hline
$2n+1$ & $2$ & $4$ & $6$ & $8$ & $10$ & $12$ & $\dots$ \\ \hline
$3n+1$  & $6$ & $30$ & $72$ & $132$ & $210$ & $306$ & $\dots$ \\ \hline
$4n+1$ & $24$ & $336$ & $1320$ & $6840$ & $12144$ & $19656$ & $\dots$ \\ \hline
$5n+1$ & $120$ & $5040$ & $32760$ & $116280$ & $303600$ & $657720$ & $\dots$ \\ \hline
\end{tabular}\end{center}
The following is a table of the corresponding Fuss-Catalan numbers.
\begin{center}\begin{tabular} {|c|ccccccc|}\hline
& $\binom{n}{n}$ & $\frac{1}{n+1}\binom{2n}{n}$ & $\frac{1}{2n+1}\binom{3n}{n}$ & $\frac{1}{3n+1}\binom{4n}{n}$ &$\frac{1}{4n+1}\binom{5n}{n}$ & $\frac{1}{5n+1}\binom{6n}{n}$ & $\ldots$  \\ \hline
$1$ & $1$ & $1$ & $1$ & $1$ & $1$ & $1$ & $\dots$ \\ \hline
$n+1$ & $1$ & $1$ & $1$ & $1$ & $1$ & $1$ & $\dots$ \\ \hline
$2n+1$ & $1$ & $2$ & $3$ & $4$ & $5$ & $6$ & $\dots$ \\ \hline
$3n+1$  & $1$ & $5$ & $12$ & $22$ & $35$ & $51$ & $\dots$ \\ \hline
$4n+1$ & $1$ & $14$ & $55$ & $140$ & $285$ & $506$ & $\dots$ \\ \hline
$5n+1$ & $1$ & $42$ & $273$ & $969$ & $2530$ & $5481$ & $\dots$ \\ \hline
\end{tabular}\end{center}
Reading left to right, these sequences are \seqnum{A000012}, \seqnum{A000108}, \seqnum{A001764}, \seqnum{A002293}, \seqnum{A002294} and \seqnum{A002295}.
\begin{example} We take the one-parameter sequence $a_n=kn+2$. We find that
$$P_n = \frac{k! \binom{(n+1)k+1}{k}}{kn+2}.$$
For $k \ge 0$, this gives us the array that begins
\begin{center}\begin{tabular} {|c|ccccccc|}\hline
$2$ & $1/2$ & $1/2$ & $1/2$ & $1/2$ & $1/2$ & $1/2$ & $\dots$ \\ \hline
$n+2$ & $1$ & $1$ & $1$ & $1$ & $1$ & $1$ & $\dots$ \\ \hline
$2n+2$ & $3$ & $5$ & $7$ & $9$ & $11$ & $13$ & $\dots$ \\ \hline
$3n+2$  & $12$ & $42$ & $90$ & $156$ & $240$ & $342$ & $\dots$ \\ \hline
$4n+2$ & $60$ & $504$ & $1716$ & $4080$ & $7980$ & $13800$ & $\dots$ \\ \hline
$5n+1$ & $360$ & $7920$ & $43680$ & $143640$ & $358800$ & $755160$ & $\dots$ \\ \hline
\end{tabular}\end{center}
The first twos sequences here are \seqnum{A001710} and \seqnum{A102693}.
Dividing $P_n$ by $\frac{k!}{2}$ now gives us the array with general element $\frac{2 \binom{(n+1)k+1}{k}}{kn+2}$ that begins
\begin{center}\begin{tabular} {|c|ccccccc|}\hline
& $\frac{2\binom{n+1}{n}}{2}$ & $\frac{2\binom{2n+1}{n}}{n+2}$ & $\frac{2 \binom{3n+1}{2}}{2n+2}$ & $\frac{2\binom{4n+1}{n}}{3n+2}$ &$\frac{2\binom{5n+1}{n}}{4n+2}$ & $\frac{2\binom{6n+1}{n}}{5n+2}$ & $\ldots$  \\ \hline
$\frac{2\binom{1}{0}}{2}$ & $1$ & $1$ & $1$ & $1$ & $1$ & $1$ & $\dots$ \\ \hline
$\frac{2\binom{n+2}{1}}{n+2}$ & $2$ & $2$ & $2$ & $2$ & $2$ & $2$ & $\dots$ \\ \hline
$\frac{2 \binom{2n+3}{2}}{2n+2}$ & $3$ & $5$ & $7$ & $9$ & $11$ & $13$ & $\dots$ \\ \hline
$\frac{2\binom{3n+4}{3}}{3n+2}$  & $4$ & $14$ & $30$ & $52$ & $80$ & $114$ & $\dots$ \\ \hline
$\frac{2\binom{4n+5}{4}}{4n+2}$ & $5$ & $42$ & $143$ & $340$ & $665$ & $1150$ & $\dots$ \\ \hline
$\frac{2\binom{5n+6}{5}}{5n+2}$ & $6$ & $136$ & $728$ & $2394$ & $5980$ & $12586$ & $\dots$ \\ \hline
\end{tabular}\end{center}
The Fuss-Catalan-Raney numbers \cite{Penson} are the numbers
$$R_{p,r}(n)=\frac{r}{pn+r}\binom{pn+r}{n}.$$
We have
$$R_{p,2}(n)=\frac{2}{pn+2}\binom{pn+2}{n}.$$
Now we have
$$R_{p,2}(n)=\frac{2}{pn+2} \binom{pn+2}{n}=\frac{2}{(p-1)n+2}\binom{pn+1}{n}.$$
In general, we have the following result.
\begin{proposition} For $a_n=kn+r$, we have
$$P_n = \frac{k!}{r} R_{k+1,r}(n).$$
\end{proposition}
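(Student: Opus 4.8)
The plan is to specialize the closed form for the gap-product sequence to the arithmetic progression $a_n = kn+r$ and then to recognize the resulting ratio of factorials as a scaled Raney number. First I would invoke the gap-product proposition $P_n = \frac{(a_{n+1}-1)!}{a_n!}$. Since $a_{n+1}-a_n = k$, the $n$th gap contains exactly $k-1$ integers, and substituting $a_n = kn+r$, $a_{n+1} = kn+k+r$ gives
$$P_n = \frac{(a_{n+1}-1)!}{a_n!} = \frac{(kn+k+r-1)!}{(kn+r)!} = \prod_{j=1}^{k-1}(kn+r+j).$$

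Next I would turn this ratio of factorials into a single binomial coefficient carrying a rational prefactor. Inserting the factor $k!\,(kn+r-1)!$ into numerator and denominator yields
$$P_n = \frac{k!}{kn+r}\binom{kn+k+r-1}{k},$$
which is consistent with the $r=2$ computation $P_n = \frac{k!}{kn+2}\binom{(n+1)k+1}{k}$ recorded above and with the $a_n=kn+1$ case $\frac{(k(n+1))!}{(kn+1)!}$.

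The final step is to identify $\frac{1}{kn+r}\binom{kn+k+r-1}{k}$ with a Raney number up to the constant $1/r$. For this I would use both forms of the Raney symbol,
$$R_{p,r}(m) = \frac{r}{pm+r}\binom{pm+r}{m} = \frac{r}{(p-1)m+r}\binom{pm+r-1}{m},$$
the second being the generalization to arbitrary $r$ of the displayed identity for $R_{p,2}$; it follows from $\binom{pm+r}{m} = \frac{pm+r}{(p-1)m+r}\binom{pm+r-1}{m}$ by a single cancellation. Writing the upper index as $kn+k+r-1 = (n+1)k + r - 1$ and comparing with the second form forces $m = k$ and $p = n+1$, and the prefactor $\frac{r}{(p-1)m+r} = \frac{r}{nk+r}$ then matches $\frac{r}{kn+r}$ exactly; multiplying by $k!$ gives the asserted closed form.

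The step I expect to be the \emph{main obstacle} is precisely this identification. It is tempting to match the summation index $n$ directly to the argument of the Raney symbol, but the binomial produced by the gap-product has lower entry equal to the common difference $k$, which is \emph{constant} in $n$; this is what dictates the argument and degree of the Raney symbol and the order in which its two parameters must be read off. The safeguard is to verify the rational prefactor $\frac{r}{kn+r}$ at the same time as the binomial, since only one of the two candidate parameter assignments makes $\frac{r}{(p-1)m+r}$ agree with $\frac{r}{kn+r}$. This bookkeeping is essentially the entire content of the statement, so I would finish with a small numerical check, for instance $k=2$, where $P_n = \frac{(2n+r+1)!}{(2n+r)!} = 2n+r+1$, to confirm that the constant and both Raney parameters have been matched correctly.
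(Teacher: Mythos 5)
Your derivation is mathematically sound, and it is worth noting that the paper offers \emph{no} proof of this proposition at all: it is stated bare, with the preceding $r=2$ example as the only implicit justification. Your route is precisely the generalization of that example --- specialize $P_n=\frac{(a_{n+1}-1)!}{a_n!}$ to get $P_n=\frac{(kn+k+r-1)!}{(kn+r)!}=\frac{k!}{kn+r}\binom{kn+k+r-1}{k}$, then apply the identity $R_{p,r}(m)=\frac{r}{pm+r}\binom{pm+r}{m}=\frac{r}{(p-1)m+r}\binom{pm+r-1}{m}$, which you state and justify correctly --- so in approach you are doing exactly what the paper intends.

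The genuine problem is your final sentence of the identification step. With $m=k$ and $p=n+1$ forced, what you have actually proved is $P_n=\frac{k!}{r}R_{n+1,r}(k)$, and this is \emph{not} the asserted closed form $\frac{k!}{r}R_{k+1,r}(n)$: the Raney symbol is not invariant under this transposition of parameters. Concretely, for $a_n=2n+1$ (so $k=2$, $r=1$) one has $P_1=4=\frac{2!}{1}R_{2,1}(2)$, whereas $\frac{2!}{1}R_{3,1}(1)=2\cdot\frac{1}{4}\binom{4}{1}=2$. So the proposition as literally printed is false, and your argument establishes the corrected, transposed version; the paper's own $r=2$ computation $P_n=\frac{k!}{kn+2}\binom{(n+1)k+1}{k}=\frac{k!}{2}R_{n+1,2}(k)$ corroborates this reading, as does the same swap in the paper's closing remark and in its earlier Fuss--Catalan proposition (whose proof's prefactor $\frac{k!}{k+1}$ should likewise read $\frac{k!}{kn+1}$). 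You explicitly identified this parameter-order trap as the main obstacle and made the right assignment, but then claimed the result ``gives the asserted closed form,'' which lets the discrepancy through unremarked: the proof should end by stating the identity in the form proved, $P_n=\frac{k!}{r}R_{n+1,r}(k)$, and flagging the statement's indices as a transcription slip. Your own proposed $k=2$ sanity check, if run against the literal statement rather than your derived one, would have exposed this immediately: $P_n=2n+r+1=\frac{2}{r}R_{n+1,r}(2)$ for all $n$, while $\frac{2}{r}R_{3,r}(n)=\frac{2}{3n+r}\binom{3n+r}{n}$ already disagrees at $n=1$, $r=1$.
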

We can reverse this identity, to obtain a characterization of the Fuss-Catalan-Raney numbers. This says that
$R_{k,r}(n)$ is $\frac{r}{k!}$ times the gap-product sequence of the sequence $a_n=kn+r$.
\end{example}

\section{Final comments}
The formula 
$$S_n=\sum_{j=1}^{a_{n+1}-a_n-1} (a_n+j)=\frac{a_{n+1}-a_n-1}{2}\left(a_n+a_{n+1}\right)$$ may still be used when the sequence $a_n$ is not monotonic increasing, though the interpretation of the resulting sequence $S_n$ is not immediate. 
\begin{example} We look at the sequence \seqnum{A088748}, which begins 
$$1, 2, 3, 2, 3, 4, 3, 2, 3, 4, 5, 4, 3, 4, 3, 2, 3, 4, 5, 4, 5, 6, 5,\ldots.$$ 
We find that as with the natural numbers $n$, we have $S_n=0$ (this assumes the convention that a summation from $j=1$ to a negative number is empty). In the case of this sequence, the quantity $a_{n+1}-a_n-1$ is only non-zero at those indices where the paper-folding sequence \seqnum{A014707} \cite{Auto} is non-zero (and equal to $1$). At those indices, $a_{n+1}-a_n-1$ is equal to $-2$.
We now look at the sequence $\tilde{S}_n=\sum_{j=1}^{|a_{n+1}-a_n-1|} (a_n+j)$, which begins 
$$0, 0, 9, 0, 0, 11, 9, 0, 0, 0, 13, 11, 0, 11, 9, 0, 0, 0, 13, 0, 0,\ldots.$$
The sequence whose only non-zero elements of value $2 a_n-1$ occur when $a_{n+1}-a_n=-1$ begins
$$0, 0, 5, 0, 0, 7, 5, 0, 0, 0, 9, 7, 0, 7, 5, 0, 0, 0, 9, 0, 0,\ldots.$$
Subtracting these sequences, we get 
$$0, 0, 4, 0, 0, 4, 4, 0, 0, 0, 4, 4, 0, 4, 4, 0, 0, 0, 4, 0, 0,\ldots,$$ or 
$4$ times the paper-folding sequence. 

This happens since the quantity $|a_{n+1}-a_n-1|$ is equal only to $2$ when it is non-zero, the summation at indices where it is non-zero becomes $2 a_n+3$, thus giving us $2 a_n+3-(2 a_n-1)=4$ at those points where the paper-folding sequence is non-zero, and $0$ otherwise.
\end{example}

\bigskip
\hrule
\bigskip
\noindent 2020 {\it Mathematics Subject Classification}: Primary
11B83; Secondary 11B25, 11B37, 11B39.
\noindent \emph{Keywords:} Integer sequence, recurrence, generating function, gap-sum sequence, gap-product sequence, Horadam sequence, Fibonacci numbers, Jacobsthal numbers, Pell numbers, Fuss-Catalan numbers.

\bigskip
\hrule
\bigskip
\noindent (Concerned with sequences
\seqnum{A000045},
\seqnum{A000012},
\seqnum{A000040},
\seqnum{A000108},
\seqnum{A000129},
\seqnum{A001045},
\seqnum{A001710},
\seqnum{A001764},
\seqnum{A002293},
\seqnum{A002294},
\seqnum{A002295},
\seqnum{A002605},
\seqnum{A005585},
\seqnum{A014707},
\seqnum{A006002},
\seqnum{A048395},
\seqnum{A054265},
\seqnum{A088748},
\seqnum{A102693},
\seqnum{A103897} and
\seqnum{A109454}.)


\begin{thebibliography}{9}

\bibitem{Auto} J. P. Allouche and J. Shallit, \emph{Automatic Sequences}, Cambridge University Press. 

\bibitem{Penson} K. A. Penson and K. \.{Z}yczkowski, Product of Ginibre matrices: Fuss-Catalan and Raney distributions, \emph{Phys. Rev. E}, \textbf{83} 2011, 061118.

\bibitem{Mansour} A formula for the generating functions of powers of Horadam's sequence, \emph{Australasian Journal of Combinatorics} \textbf{30} (2004) 207--212.

\bibitem{SL1} N. J. A.~Sloane, \emph{The
On-Line Encyclopedia of Integer Sequences}. Published electronically
at \texttt{http://oeis.org}, 2016.

\bibitem{SL2} N. J. A.~Sloane, The On-Line Encyclopedia of Integer
Sequences, \emph{Notices Amer. Math. Soc.}, \textbf{50} (2003),  912--915.


\end{thebibliography}
\end{document}